\newtheorem{thm}{Theorem} [section]
\newtheorem{cor}[thm]{Corollary}
\newtheorem{lem}[thm]{Lemma}
\theoremstyle{definition}
\theoremstyle{remark}
\newtheorem{rem}[thm]{Remark}
\numberwithin{equation}{section}
\newcommand{\A}{\mathcal{A}}
\newcommand{\fq}{{\mathbb F}_{q}}
\newcommand{\fqr}{{\mathbb F}_{q^r}}
\newcommand{\ftwo}{{\mathbb F}_{2}}
\newcommand{\G}{{\mathcal G}}
\newcommand{\M}{{\mathcal M}}
\newcommand{\B}{{\mathcal B}}
\newcommand{\rmv}[1]{}
\def\<{\left\langle}
\def\>{\right\rangle}
\begin{document}

\title[
Multiplicative character sums over two classes of subsets]
{
Multiplicative character sums over two classes of subsets of quadratic extensions of finite fields}%
\author{Kaimin Cheng}
\address{School of Mathematics and Information, China West Normal University, Nanchong, 637002, P. R. China}
\email{ckm20@126.com}
\author{Arne Winterhof}
\address{Johann Radon Institute for Computational and Applied Mathematics, Austrian Academy of Science, Linz 4040, Austria}
\email{arne.winterhof@ricam.oeaw.ac.at}
\subjclass{Primary 11T23}%
\keywords{Finite fields, character sums, sparse elements, primitive elements.}
\date{\today}
\begin{abstract}
Let $q$ be a prime power and $r$ a positive even integer. Let $\fq$ be the finite field with $q$ elements and $\fqr$ be its extension field of degree $r$. Let $\chi$ be a nontrivial multiplicative character of $\fqr$ and $f(X)$ a polynomial over $\fqr$ with a simple root in~$\fqr$. In this paper, we improve estimates for character sums
$\sum\limits_{g \in\G}\chi(f(g))$,
where $\G$ is 
either a subset of $\fqr$ of sparse elements, with respect to some fixed basis of $\fqr$ which contains a basis of $\mathbb{F}_{q^{r/2}}$, or a subset avoiding affine hyperplanes in general position.
While such sums have been previously studied, our approach yields sharper bounds by reducing them to sums over the subfield $\mathbb{F}_{q^{r/2}}$ rather than sums over general linear spaces.
These estimates can be used to prove the existence of primitive elements in~$\G$ in the standard way. 
\end{abstract}

\maketitle
\section{Introduction}

Let $q$ be a power of a prime $p$ and $\fq$ be the finite field with $q$ elements. Let $r$ be a positive integer and $\fqr$ be the extension field over $\fq$ of degree $r$. Let $\chi$ be a nontrivial multiplicative character of $\fqr$, $\G$ be a subset of $\fqr$ and $f(X)$ be a  polynomial in $\fqr[X]$. Let  $S(\mathcal{G},\chi,f)$ be the character sum defined by 
$$S(\mathcal{G},\chi,f)=\sum_{\gamma\in\mathcal{G}}\chi(f(\gamma)),$$
and set $S(\mathcal{G},\chi)=S(\mathcal{G},\chi,f)$ if $f(X)=X$.
 For background on character sums over finite fields, see, for example, \cite{[LN],[MSW],[MP],[Wan],[W01]}.

Recently,  Grzywaczyk and the second author \cite{[GW]} provided an estimate for $S(\G',\chi)$, where 
\begin{itemize}
    \item
$\G'$ is a set of elements in $\fqr$ avoiding affine hyperplanes in general position (see the definition in Section 3),
\end{itemize}
which led to the solution of a problem of Fernandes and Reis \cite{[FR]} for $4\le q\le 5$ and sufficiently large $r$. This problem had already been settled in \cite{[FR]} for $q\ge 11$ and any~$r$ as well as for $7\le q\le 9$ and sufficiently large $r$. For $q=3$ and sufficiently large $r$ see \cite{[IS]} and \cite[Corollary 3.5]{[GW]}.

Moreover, M\'{e}rai, Shparlinski, and the second author \cite{[MSW]} obtained a bound for $S(\G'',\chi,f)$, where 
\begin{itemize}
\item $\G''$ is a set of sparse elements of $\fqr$ with a fixed weight (see the definition in Section 4).
\end{itemize}
Actually, \cite{[MSW]} deals with more general mixed character sums but here we deal with multiplicative character sums.

Now let $r$ be a positive even integer. We work with the extension field $\fqr$ 
and its subfield $\mathbb{F}_{q^{r/2}}$. Naturally,  $\mathbb{F}_{q^{r/2}}$ is also a subspace of  $\fqr$ over~$\fq$.
Throughout the paper, we assume that $\{\alpha_1,\ldots,\alpha_{r/2}\}$ is a basis of $\mathbb{F}_{q^{r/2}}$ over $\fq$, and $\{\alpha_1,\ldots,\alpha_r\}$ is a completion of $\{\alpha_1,\ldots,\alpha_{r/2}\}$ to a basis of $\fqr$ over $\fq$. In this paper, we focus on estimating the character sums $S(\G, \chi, f)$ for the two classes of subsets $\G=\G'$ and $\G=\G''$ of $\fqr$, where~$f(X)$ is a polynomial over $\fqr$ with a simple root in $\fqr$. Actually, by applying Wan’s bound \cite{[Wan]} on character sums over a subfield, more precisely the modified version of Martin and Yip \cite{[MY]}, we provide significant improvements over previous results on character sums obtained from bounds on character sums over general linear spaces. Note that we can also obtain bounds using any subfield $\mathbb{F}_{q^d}$ of $\mathbb{F}_{q^r}$ for some $d|r$. However, for~$d<\frac{r}{2}$ these bounds are not better than those obtained before via bounds for sums over subspaces, see for example \cite{[W01]}.

Our contributions can be summarized as follows.
\begin{itemize}[left=1.5em] 
\item $\G$ with restricted coordinates (Section 3):
We study the first class of subsets consisting of elements whose coordinates are restricted with respect to a given basis of $\fqr$ over $\fq$. For these subsets we derive a general estimate for the character sums. A direct application of this result leads to a substantial improvement of the work by Fernandes and Reis \cite{[FR]}, Iyer and Shparlinski \cite{[IS]}, and Grzywaczyk and the second author \cite{[GW]}
in our special case described above. In particular, for even and sufficiently large $r$ we strengthen their results by proving that a subset of the finite field $\fqr$ avoiding affine hyperplanes must contain a primitive element for much smaller even $r$, see Table \ref{table1} below. 
Note that for $q=2$ these sets consist of only one element.
\item $\G$ with $s$-sparse elements (Section 4): For $q=2$, Mérai, Shparlinski, and the second author \cite{[MSW]} investigated hybrid character sums over $\G$ with $s$-sparse elements, defined as 
$$S(\G,\chi,\psi,f_1,f_2)=\sum_{\gamma\in\G}\chi(f_1(\gamma))\psi(f_2(\gamma)),$$
where $\psi$ is an additive character of $\fqr$, and $f_1, f_2 \in \fqr(X)$. Under certain conditions, they established the bound
$$|S(\G,\chi,\psi,f_1,f_2)|\le 2^{\eta(\rho)r+o(r)},$$
where $\eta(\rho)$ is a function of $\rho = \frac{\min\{s, r-s\}}{r}$. For further details, we refer to \cite[Theorem 2.5]{[MSW]}. In this paper, for even integers $r$ and with $\psi = \psi_0$ (the trivial additive character), we improve the bound of 
\cite{[MSW]}
by providing a sharper estimate:
$$|S(\G,\chi,\psi_0,f_1,f_2)|\le 2^{\eta'(\rho)r+o(r)},$$
where $\eta'(\rho)$ is a function of $\rho$ defined in Theorem \ref{thm4.2}. For comparison, the trivial bound is given by
$$|S(\G,\chi,\psi,f_1,f_2)|\le 2^{H(\rho)r+o(r)},$$
where $H(x)$ denotes the binary entropy function defined by \eqref{entropy}. A comparison of these three estimates is illustrated in Figure \ref{fig1} below.
\begin{figure}[htbp]
    \centering
    \includegraphics[width=0.6\textwidth]{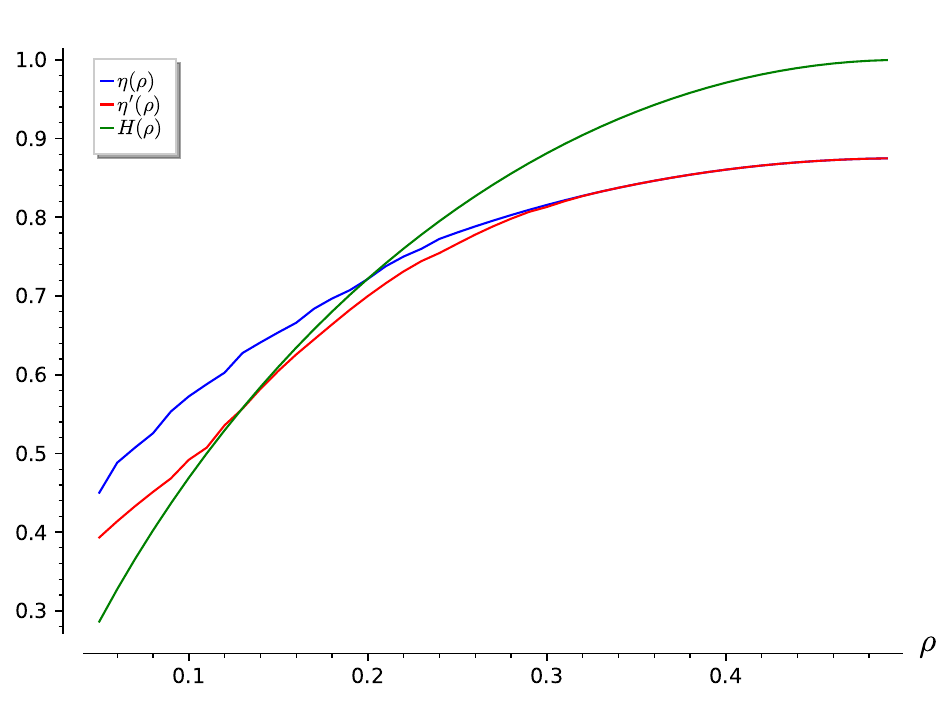}  
    \caption{Comparison of $\eta(\rho)$, $\eta'(\rho)$ and $H(\rho)$}
    \label{fig1}
\end{figure}
Our estimate clearly outperforms the previous bounds. Specifically, with the aid of computational tools, we obtain the following values:
$$\eta^{\prime}(0.13) \approx 0.55680, \quad \eta(0.13) \approx 0.62751, \quad H(0.13) \approx 0.55743,$$
and
$$\eta^{\prime}(0.32) \approx 0.82690, \quad \eta(0.32) \approx 0.82719, \quad H(0.32) \approx 0.90438.$$
This demonstrates the superiority of our estimate, particularly in the range $0.13 < \rho < 0.32$, where our estimate outperforms both the trivial bound and the previous estimate of \cite{[MSW]}.
However, our restrictions are stronger, in particular, we need $r$ even.
\end{itemize}

\section{Preliminaries}
In this section, we introduce some preliminary concepts and key results. 

We start with a corrected version of a result of Wan \cite[Corollary~2.4]{[Wan]} given in \cite[Corollary~3.5]{[MY]}.
Recall that the elements $\alpha,\alpha^q,\ldots,\alpha^{q^{r-1}}\in \fqr$ are {\em conjugates} over $\fq$.

\begin{lem}\label{lem2.1} 
Let $\chi_1,\dots,\chi_n$ be  
multiplicative characters of $\fqr$. Let $f_1(X),\ldots,f_n(X)$ be polynomials in $\fqr[X]$ such that no two of them share conjugated roots over $\fq$. Let~$D$ be the degree of the largest square-free divisor of $\prod\limits_{i=1}^nf_i(X)$. Suppose that for some $1\le i\le n$, there is a root $\xi_i$ with multiplicity $m_i$ of $f_i(X)$ such that the character $\chi_i^{m_i}$ is nontrivial on the set $Norm_{\fqr[\xi_i]/\fqr}(\fq[\xi_i])\setminus\{0\}$. Then, we have the estimate
$$\left|\sum_{a\in\fq}\chi_1(f_1(a))\cdots\chi_n(f_n(a))\right|\le (rD-1)q^{1/2}.$$
\end{lem}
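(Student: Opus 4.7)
The plan is to follow the $L$-function approach of Wan~\cite{[Wan]}, with the refinement of Martin and Yip~\cite{[MY]}, and to apply the Weil bound for curves over a finite field. I would associate to the data $(\chi_1,\ldots,\chi_n,f_1,\ldots,f_n)$ an $L$-function $L(T)$, defined as a formal Euler product over the monic irreducible polynomials $P\in\fq[X]$, with each local Euler factor built from the values of $\chi_i$ on $f_i$ evaluated at the roots of $P$ in $\Fbar_q$. The hypothesis that no two $f_i$ share a $\fq$-conjugate root is exactly what ensures that each place carries at most one nontrivial contribution, so that the local factors multiply cleanly to a global $L(T)$. A standard computation then identifies $-T\,\tfrac{d}{dT}\log L(T)$ as a generating series whose coefficient of $T$ encodes the character sum $\sum_{a\in\fq}\chi_1(f_1(a))\cdots\chi_n(f_n(a))$.

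Next I would show that $L(T)$ is in fact a polynomial in $T$ of degree at most $rD-1$. The estimate $rD$ on the conductor comes from the observation that each root $\xi$ of some $f_i$ in $\fqr$ splits into at most $r$ places of the rational function field $\fq(X)$, one per $\fq$-Galois conjugate, so the total contribution of all distinct roots of $\prod_i f_i$ to the conductor is at most $rD$, with one additional degree lost at infinity. Once polynomiality of $L(T)$ of this degree is established, Weil's Riemann Hypothesis for $L$-functions of curves over $\fq$ bounds every reciprocal root of $L(T)$ by $q^{1/2}$, and hence the coefficient of $T$ in $-T\,\tfrac{d}{dT}\log L(T)$ by $(rD-1)q^{1/2}$.

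The delicate step, which is where Martin and Yip correct Wan, is in verifying that $L(T)$ is genuinely a polynomial of the claimed degree rather than collapsing to a lower-degree factor (or to a unit). The local $L$-factor at the place associated with a root $\xi_i$ of $f_i$ depends on the restriction of $\chi_i^{m_i}$ to the norm subgroup $Norm_{\fqr[\xi_i]/\fqr}(\fq[\xi_i])\setminus\{0\}$, which is in general a proper subgroup of $\fqr^*$. A character that is nontrivial on $\fqr^*$ may nevertheless restrict trivially to this subgroup, in which case the local factor degenerates and the degree count breaks. The precise hypothesis in the statement is tailored so that this failure does not occur for at least one index $i$, which is enough to make $L(T)$ non-constant and to conclude via Weil. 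Verifying that the norm-nontriviality hypothesis has exactly this effect, and not merely the cruder condition that $\chi_i^{m_i}$ be nontrivial on $\fqr^*$, is the main technical obstacle and the content of the correction in~\cite{[MY]}.
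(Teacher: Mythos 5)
You should first note that the paper does not prove this statement at all: Lemma~\ref{lem2.1} is quoted verbatim from the corrected version of Wan's result given by Martin and Yip \cite[Corollary~3.5]{[MY]} (correcting \cite[Corollary~2.4]{[Wan]}), and the only original remark in the paper is that the monicity hypothesis of the cited result can be dropped by factoring out leading coefficients, a point your sketch does not address but which is indeed trivial since each $\chi_i(c_i)$ is a unimodular constant. So there is no in-paper proof to compare against; the relevant comparison is with the argument of \cite{[Wan],[MY]}, and your outline does follow that argument's skeleton: attach an $L$-function over the rational function field, bound its degree by a conductor count of size $rD$ (plus the place at infinity), and apply Weil's Riemann Hypothesis to bound the degree-one coefficient of the logarithmic derivative by $(rD-1)q^{1/2}$. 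You also correctly locate the delicate point that \cite{[MY]} fixes, namely that nontriviality of $\chi_i^{m_i}$ on $\fqr^*$ is not enough and the hypothesis must be nontriviality on the norm set $\mathrm{Norm}_{\fqr[\xi_i]/\fqr}(\fq[\xi_i])\setminus\{0\}$.

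As a proof, however, your text has genuine gaps: every step that carries the actual content is asserted rather than carried out. The construction of the $L$-function (equivalently, of the Hecke character of $\fq(X)$ ramified at the roots and at infinity), the identification of the character sum with the first coefficient of $-T\frac{d}{dT}\log L(T)$, the conductor computation giving degree at most $rD-1$, and above all the verification that the norm-nontriviality hypothesis at a single root forces the induced local character (hence the global character) to be nontrivial are all left as ``standard'' or explicitly deferred to \cite{[MY]} --- your last paragraph concedes that this is ``the main technical obstacle,'' which is precisely the part a proof must supply. Two smaller inaccuracies: a root $\xi$ of $\prod_i f_i$ does not ``split into at most $r$ places, one per $\fq$-conjugate''; it determines a single place of $\fq(X)$, of degree $[\fq(\xi):\fq]\le r\,[\fqr(\xi):\fqr]$, and the bound $rD$ comes from grouping the $D$ roots of the square-free part into $\fqr$-conjugacy orbits and summing the place degrees. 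Moreover, the failure mode when the hypothesis is violated is not that $L(T)$ ``collapses to a lower-degree factor'' (a smaller degree would only improve the bound) but that the global character may be trivial, in which case $L(T)$ acquires the zeta-type pole and the sum has a main term of size $q$ rather than $O(q^{1/2})$. So the verdict is: right strategy, same route as the literature the paper cites, but the decisive nontriviality verification and the degree computation are not actually proved.
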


Note that we dropped the condition that the polynomials $f_1(X),\ldots,f_n(X)$ are monic since the general case can be trivially reduced to this special case.

As usual we call $\alpha$ a {\em defining element} of $\fqr$ if $\fqr=\fq(\alpha)$ and denote by $\overline{\chi}$ the {\em conjugated character} of $\chi$.
We will use the following corollary. 

\begin{cor}\label{cor2.2}
Let $\chi$ be a nontrivial multiplicative character of $\fqr$. Let $f(X)$ be a polynomial over $\fqr$ having a simple root in $\fqr$.  Let $w_1$ and $w_2\in\fqr$ satisfy that $f(X+w_1)$ and $f(X+w_2)$ share no conjugated roots over $\fq$, and for a simple root $\xi$ of $f(X)$ in $\fqr$, either $\xi-w_1$ or $\xi-w_2$ is a defining element of $\fqr$ over $\fq$. Then we have
$$\left|\sum_{a\in\fq}\chi(f(a+w_1))\overline{\chi}(f(a+w_2))\right|\le(2rD-1)q^{1/2},$$
where $D$ is the degree of the largest square-free divisor of $f(X)$.
\end{cor}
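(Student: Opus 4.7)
The plan is to apply Lemma~\ref{lem2.1} directly with $n=2$, taking $f_1(X)=f(X+w_1)$, $f_2(X)=f(X+w_2)$, $\chi_1=\chi$ and $\chi_2=\overline{\chi}$. All three hypotheses of the lemma then need to be checked, after which the bound drops out of the lemma with the degree parameter $D$ replaced by $2D$, giving exactly $(2rD-1)q^{1/2}$.

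The hypothesis that no two of the polynomials share conjugated roots over $\fq$ is assumed in the corollary. In particular $f_1$ and $f_2$ have no common root in $\overline{\fq}$, so the largest square-free divisor of $f_1f_2$ splits as the product of the square-free parts of $f_1$ and of $f_2$. Since translation by $w_i$ preserves multiplicities, the square-free parts of $f_1$ and of $f_2$ have the same degree $D$ as that of $f$. Hence the degree parameter that appears in Lemma~\ref{lem2.1} is $2D$, which is where the factor of $2$ in the final bound comes from.

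The one nontrivial thing to verify is the character/norm condition. Let $\xi\in\fqr$ be the simple root of $f(X)$ from the hypothesis; then $\xi-w_1$ is a simple root of $f_1$ and $\xi-w_2$ is a simple root of $f_2$, both of multiplicity $m_i=1$. By assumption at least one of them, say $\xi_j:=\xi-w_j$, is a defining element of $\fqr$ over $\fq$, meaning $\fq[\xi_j]=\fqr$. Since $\xi_j\in\fqr$ we also have $\fqr[\xi_j]=\fqr$, so the norm $Norm_{\fqr[\xi_j]/\fqr}$ is simply the identity on $\fq[\xi_j]\setminus\{0\}=\fqr^*$. Therefore $\chi_j^{m_j}$, which is $\chi$ or $\overline{\chi}$, must be shown nontrivial on $\fqr^*$; this is immediate from the nontriviality of $\chi$.

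With all conditions verified, Lemma~\ref{lem2.1} yields the desired estimate. I do not expect any real obstacle: the corollary is essentially a packaging of the $n=2$ case of the lemma, and the main care is in parsing the norm condition, where the key simplification is that $\xi_j\in\fqr$ forces $\fqr[\xi_j]/\fqr$ to be trivial so that ``nontrivial on the norm image'' reduces to ``nontrivial on $\fqr^*$''.
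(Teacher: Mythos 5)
Your proof is correct and follows essentially the same route as the paper: apply Lemma~\ref{lem2.1} with $n=2$ to $f_1(X)=f(X+w_1)$, $f_2(X)=f(X+w_2)$, noting that the defining-element hypothesis makes the norm set equal to $\fqr^*$ (so nontriviality of $\chi$ suffices) and that the square-free part of $f_1f_2$ has degree $2D$. Your discussion of the degree parameter is slightly more detailed than the paper's, but the argument is the same.
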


\begin{proof}
Let $f_1(X)=f(X+w_1)$ and  $f_2(X)=f(X+w_2)$. Since $\xi-w_i$ is a simple root of $f_i(X+w_i)$ for $i=1,2$, and for some $i\in \{1,2\}$ we have
${\rm Norm}_{\fqr[\xi-w_i]/\fqr}(\fq[\xi-w_i])={\rm Norm}_{\fqr/\fqr}(\fqr)=\fqr$, that is, Lemma \ref{lem2.1} is applicable and the result follows.
\end{proof}

The following result is immediate from Corollary \ref{cor2.2}.
We call two subspaces $\mathcal{V}$ and $\mathcal{W}$ of $\fqr$ {\em complementary} if
$$\mathcal{V}+\mathcal{W}=\{v+w: v\in \mathcal{V}, w\in \mathcal{W}\}=\fqr.$$
\begin{cor}\label{cor2.4}
Let $r$ be an even positive integer and $\chi$ be a nontrivial multiplicative character of $\fqr$. Let $\mathcal{H}$ be a complementary subspace of $\mathbb{F}_{q^{r/2}}$ in $\fqr$ over $\fq$. Let $\mathcal{W}$ be a subset of $\mathcal{H}$. Let $f(X)$ be a polynomial over $\fqr$ with 
a simple root $\xi\in \fqr$. Let $\B$ be the set of pairs $(w_1,w_2)\in\mathcal{W}^2$ such that $f(X+w_1)$ and $f(X+w_2)$ share conjugated roots over $\mathbb{F}_{q^{r/2}}$. Then, for any $(w_1,w_2)\in\mathcal{W}^2\setminus \B$, we have
\begin{align}\label{c2-1}
\left|\sum_{a\in\mathbb{F}_{q^{r/2}}}\chi(f(a+w_1))\overline{\chi}(f(a+w_2))\right|\le(4D-1)q^{r/4},
\end{align}
where $D$ is the degree of the largest square-free divisor of $f(X)$. 
Moreover, we have
$$\#\B\le 2D^2\#\mathcal{W}.$$
\end{cor}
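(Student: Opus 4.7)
The plan is to obtain the bound (\ref{c2-1}) as a direct application of Corollary~\ref{cor2.2} to the tower $\mathbb{F}_{q^{r/2}}\subset\fqr$, and then to bound $\#\B$ by parametrizing the bad pairs $(w_1,w_2)$ through the roots of $f$ and the Galois group $\mathrm{Gal}(\fqr/\mathbb{F}_{q^{r/2}})$.

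First I would apply Corollary~\ref{cor2.2} with the base field replaced by $\mathbb{F}_{q^{r/2}}$ and the extension degree equal to $2$: substituting $q\mapsto q^{r/2}$ and $r\mapsto 2$ turns the bound $(2rD-1)q^{1/2}$ into precisely $(4D-1)q^{r/4}$. The hypothesis that $f(X+w_1)$ and $f(X+w_2)$ do not share a conjugated root over $\mathbb{F}_{q^{r/2}}$ is exactly the statement that $(w_1,w_2)\notin\B$. For the defining-element hypothesis, I would use the decomposition $\fqr=\mathbb{F}_{q^{r/2}}\oplus\mathcal{H}$ to write $\xi=u+v$ uniquely with $u\in\mathbb{F}_{q^{r/2}}$ and $v\in\mathcal{H}$: since each $w_i\in\mathcal{H}$ and $\mathcal{H}\cap\mathbb{F}_{q^{r/2}}=\{0\}$, the element $\xi-w_i$ lies in $\mathbb{F}_{q^{r/2}}$ exactly when $w_i=v$, and is a defining element of $\fqr$ over $\mathbb{F}_{q^{r/2}}$ otherwise. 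If $w_1=w_2$, then $f(X+w_1)$ and $f(X+w_2)$ coincide and share all roots, so $(w_1,w_2)\in\B$; hence on $\mathcal{W}^2\setminus\B$ at most one of $w_1,w_2$ equals $v$, and Corollary~\ref{cor2.2} applies.

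Next I would bound $\#\B$ by letting $\xi_1,\ldots,\xi_D$ be the distinct roots of $f$ in $\overline{\fq}$ and $\phi\colon x\mapsto x^{q^{r/2}}$ be the generator of $\mathrm{Gal}(\fqr/\mathbb{F}_{q^{r/2}})$: a pair $(w_1,w_2)$ lies in $\B$ exactly when $\xi_i-w_1=\phi^m(\xi_j-w_2)$ for some $i,j\in\{1,\ldots,D\}$ and some $m\ge 0$. I would then split on the parity of $m$, using that $w_2\in\fqr$ gives $\phi^{2k}(w_2)=w_2$ and that the $q^r$-Frobenius permutes the roots of $f\in\fqr[X]$, so $\xi_j^{q^{rk}}$ is again some root $\xi_{j'}$ of $f$. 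In the even case $m=2k$ the equation collapses to $w_1-w_2=\xi_i-\xi_{j'}$, giving at most $D^2$ possible right-hand sides and hence at most $D^2\#\mathcal{W}$ pairs; in the odd case $m=2k+1$ it becomes $w_1-w_2^{q^{r/2}}=\xi_i-\xi_{j'}^{q^{r/2}}$, again with at most $D^2$ right-hand sides and, since $w_2$ determines both $w_2^{q^{r/2}}$ and therefore $w_1$, at most $D^2\#\mathcal{W}$ pairs. Summing gives $\#\B\le 2D^2\#\mathcal{W}$.

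The only place I expect to have to be careful is the odd-parity count, where the Frobenius $\phi$ acts nontrivially on both $\xi_j$ and $w_2$; one must absorb the even part of the exponent $m$ into a re-indexing of the roots of $f$ (as permitted by the $q^r$-Frobenius) to see that the effective parameter space for the right-hand sides is $\{1,\ldots,D\}^2$ rather than something a priori indexed by all Galois conjugates of the $\xi_j$ over $\mathbb{F}_{q^{r/2}}$.
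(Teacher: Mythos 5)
Your proposal is correct and follows essentially the same route as the paper: apply Corollary~\ref{cor2.2} with base field $\mathbb{F}_{q^{r/2}}$ and extension degree $2$, verify the defining-element hypothesis using $\mathcal{H}\cap\mathbb{F}_{q^{r/2}}=\{0\}$ (the paper does this by the contrapositive $w_1-w_2\in\mathcal{H}\setminus\{0\}$, you via the decomposition $\xi=u+v$, noting $(w,w)\in\B$), and bound $\#\B$ by at most $2D^2$ bad $w_2$ per fixed $w_1$. Your parity-of-Frobenius argument simply spells out the factor $2D^2$ that the paper asserts without detail, so the two proofs coincide in substance.
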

\begin{proof}
Let $(w_1,w_2)\in\mathcal{W}^2\setminus \B$. By definition $f(X+w_1)$ and $f(X+w_2)$ share no conjugated roots over $\mathbb{F}_{q^{r/2}}$. We claim that $\xi-w_i\not\in\mathbb{F}_{q^{r/2}}$ for some $i\in\{1,2\}$, and thus $\xi-w_i$ is a defining element of $\fqr$ over $\mathbb{F}_{q^{r/2}}$. Suppose $\xi-w_i\in\mathbb{F}_{q^{r/2}}$ for $i=1,2$. Then $w_1-w_2=(\xi-w_2)-(\xi-w_1)\in\mathbb{F}_{q^{r/2}}$. However, we have $w_1-w_2\in\mathcal{H}\setminus\{0\}\subset \fqr\setminus \mathbb{F}_{q^{r/2}}$. 
Therefore, Corollary \ref{cor2.2} implies \eqref{c2-1}.

Note that the polynomials $f(X+w_1)$ and $f(X+w_2)$ have both at most $D$ distinct roots. For any of the $\#\mathcal{W}$ fixed choices of $w_1\in \mathcal{W}$ there are at most $2D^2$ different $w_2$ such that one
of the roots of $f(X+w_2)$ is conjugated to a root of $f(X+w_2)$ and the second result follows.
\end{proof}

\section{$\mathcal{G}$ with restricted coordinates}
In this section, we focus on the first class of subsets, which consist of elements of $\fqr$ with restricted coordinates relative to a given basis of  $\fqr$ over $\fq$. 

Let $\mathcal{C}$ be a collection of $r$ subsets of $\fqr$. We call $\mathcal{C}$ a set of {\it$\fq$-affine hyperplanes in general position}, if for all $1\le k\le r$ the intersection of any $k$ distinct elements of $\mathcal{C}$ is an $\fq$-affine space of dimension $r-k$.

Let $r$ be an even positive integer. Let $\A=\{\mathcal{A}_i\}_{i=1}^r$ be a family of subsets of $\fq$. In this section, we define the subset $\mathcal{G}_{\A}$ as follows
\begin{align}\label{c3-0}
\mathcal{G}_{\A}=\{a_1\alpha_1+\cdots+a_r\alpha_r: a_1\in\mathcal{A}_1,\ldots, a_r\in\mathcal{A}_r \},
\end{align}
where $\alpha_1,\ldots,\alpha_{r/2}$ is a given basis of $\mathbb{F}_{q^{r/2}}$ over $\fq$, and $\alpha_1,\ldots,\alpha_r$ is a completion of $\alpha_1,\ldots,\alpha_{r/2}$ to a basis of $\fqr$ over $\fq$. More generally, we can take any basis of $\fqr$ which contains a basis of $\mathbb{F}_{q^{r/2}}$.
It can be shown that the sets $\G_{\A}$ defined in \eqref{c3-0} with $|\A_1|=\ldots=|\A_r|=q-1$ are exactly the sets avoiding affine hyperplanes in general position, see the introduction of \cite{[GW]}.

Now, we present the first result, which gives an upper bound on the character sum over the subset $\G_{\A}$. 

\begin{thm}\label{thm3.1}
Let $r$ be an even integer, and $\chi$ be a nontrivial multiplicative character of the finite field $\fqr$. Let $\{\alpha_1, \ldots, \alpha_r\}$ be a basis of $\fqr$ over $\fq$, which contains a basis $\{\alpha_1,\ldots,\alpha_{r/2}\}$ of $\mathbb{F}_{q^{r/2}}$ over $\fq$. For a family $\A= \{A_i\}_{i=1}^r$ of subsets of $\fq$, define $\G_{\A}$ as in \eqref{c3-0}. Let $f(X)$ be a polynomial over $\fqr$ with 
a simple root in $\fqr$. Then, we have 
$$|S(\G_{\A},\chi,f)|\le(\#\G_{\A})^{1/2}q^{r/8}\left(\prod_{i=r/2+1}^{r}\#\A_i(4D-1)+2D^2q^{r/4}\right)^{1/2},$$
where $D$ is the degree of the largest square-free divisor of $f(X)$.
\end{thm}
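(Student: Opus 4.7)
The plan is to exploit the product structure of $\G_{\A}$ with respect to the two halves of the basis. Split the basis as $\{\alpha_1,\dots,\alpha_{r/2}\}$ (a basis of $\mathbb{F}_{q^{r/2}}$) and $\{\alpha_{r/2+1},\dots,\alpha_r\}$, and let
$$\mathcal{A}'=\{a_1\alpha_1+\cdots+a_{r/2}\alpha_{r/2}:a_i\in\A_i\}\subseteq\mathbb{F}_{q^{r/2}},\qquad \mathcal{W}=\{a_{r/2+1}\alpha_{r/2+1}+\cdots+a_r\alpha_r:a_i\in\A_i\}\subseteq\mathcal{H},$$
where $\mathcal{H}=\Span_{\fq}\{\alpha_{r/2+1},\dots,\alpha_r\}$ is complementary to $\mathbb{F}_{q^{r/2}}$. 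Then $\G_{\A}$ decomposes as the (disjoint) sum $\mathcal{A}'+\mathcal{W}$, so
$$S(\G_{\A},\chi,f)=\sum_{a\in\mathcal{A}'}\sum_{w\in\mathcal{W}}\chi(f(a+w)).$$

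Next, I would apply the Cauchy--Schwarz inequality on the outer sum and enlarge the range from $\mathcal{A}'$ to all of $\mathbb{F}_{q^{r/2}}$ (since the integrand is nonnegative), giving
$$|S(\G_{\A},\chi,f)|^2\le \#\mathcal{A}'\sum_{a\in\mathbb{F}_{q^{r/2}}}\Bigl|\sum_{w\in\mathcal{W}}\chi(f(a+w))\Bigr|^2.$$
Expanding the square and interchanging summations yields
$$|S(\G_{\A},\chi,f)|^2\le \#\mathcal{A}'\sum_{(w_1,w_2)\in\mathcal{W}^2}\sum_{a\in\mathbb{F}_{q^{r/2}}}\chi(f(a+w_1))\overline{\chi}(f(a+w_2)).$$

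Now split the outer sum according to whether $(w_1,w_2)$ lies in the bad set $\B$ from Corollary \ref{cor2.4}. For $(w_1,w_2)\notin\B$, Corollary \ref{cor2.4} bounds the inner sum by $(4D-1)q^{r/4}$, contributing at most $(\#\mathcal{W})^2(4D-1)q^{r/4}$ in total. For $(w_1,w_2)\in\B$, use the trivial bound $q^{r/2}$ for the inner sum together with $\#\B\le 2D^2\#\mathcal{W}$ from the same corollary, contributing at most $2D^2\#\mathcal{W}\,q^{r/2}$. Combining and using $\#\G_{\A}=\#\mathcal{A}'\cdot\#\mathcal{W}$ and $\#\mathcal{W}=\prod_{i=r/2+1}^{r}\#\A_i$ gives
$$|S(\G_{\A},\chi,f)|^2\le \#\G_{\A}\cdot q^{r/4}\left(\prod_{i=r/2+1}^{r}\#\A_i\,(4D-1)+2D^2 q^{r/4}\right),$$
and the stated bound follows by taking square roots.

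The main obstacle is mild: one must verify that when completing $\mathcal{A}'$ to $\mathbb{F}_{q^{r/2}}$ and then expanding, the hypotheses of Corollary \ref{cor2.4} apply. This is automatic because $\mathcal{W}\subseteq\mathcal{H}$ is contained in a complementary subspace of $\mathbb{F}_{q^{r/2}}$, which is exactly the setup of that corollary; the remaining work is purely bookkeeping of the two contributions to the double sum.
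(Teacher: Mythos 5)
Your proposal is correct and follows essentially the same route as the paper: the same splitting $\G_{\A}=\mathcal{V}\oplus\mathcal{W}$ along the two halves of the basis, Cauchy--Schwarz on the outer sum, enlargement of $\mathcal{V}$ to all of $\mathbb{F}_{q^{r/2}}$, and then Corollary \ref{cor2.4} for the good pairs together with the trivial bound $q^{r/2}$ and $\#\B\le 2D^2\#\mathcal{W}$ for the bad pairs. The only cosmetic difference is that after expanding the square you should keep absolute values on the inner sums before splitting into the two cases, exactly as in the paper.
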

\begin{proof}
First, we write $\G_{\A}$ as a direct sum $\G_{\A}=\mathcal{V}\oplus \mathcal{W}$, where
$$\mathcal{V}=\{a_1\alpha_1+\cdots+a_{r/2}\alpha_{r/2}: a_1\in\mathcal{A}_1,\ldots, a_{r/2}\in\mathcal{A}_{r/2} \}$$
and 
$$\mathcal{W}=\{a_{r/2+1}\alpha_{r/2+1}+\cdots+a_r\alpha_r: a_{r/2+1}\in\mathcal{A}_{r/2+1},\ldots, a_r\in\mathcal{A}_r \}.$$
Then we can express the character sum $S(\G_{\A},\chi,f)$ as follows:
$$S(\G_{\A},\chi,f)=\sum_{\gamma\in\G}\chi(f(\gamma))=\sum_{v\in \mathcal{V}}\sum_{w\in \mathcal{W}}\chi(f(v+w)).$$
By the Cauchy-Schwarz inequality, we obtain 
\begin{align*}
|S(\G_{\A},\chi,f)|^2&\le\# \mathcal{V}\sum_{v\in \mathcal{V}}\left|\sum_{w\in \mathcal{W}}\chi(f(v+w))\right|^2\\
&\le\# \mathcal{V}\sum_{v\in\mathbb{F}_{q^{r/2}}}\left|\sum_{w\in \mathcal{W}}\chi(f(v+w))\right|^2\\
&\le\# \mathcal{V}\sum_{w_1,w_2\in \mathcal{W}}\left|\sum_{v\in\mathbb{F}_{q^{r/2}}}\chi(f(v+w_1))\overline{\chi(f(v+w_2))}\right|.
\end{align*}
Next, we bound the inner sum. Let $\B$ be defined as in Corollary \ref{cor2.4}. For each of the at most $2\# \mathcal{W}D^2$ pairs
$(w_1,w_2)\in\B$ we use the trivial bound $q^{r/2}$. Otherwise, we use Corollary \ref{cor2.4} to bound this sum. Therefore, we have
\begin{eqnarray*}
|S(\G_{\A},\chi,f)|^2&\le& \# \mathcal{V}\left(\#\mathcal{W}^{2}(4D-1)q^{r/4}+2D^2\# \mathcal{W}q^{r/2}\right)\\
&\le &\#\G_{\A} q^{r/4}\left(\prod_{i=r/2+1}^{r}\#\A_i(4D-1)+2D^2q^{r/4}\right),
\end{eqnarray*}
which completes the proof.
\end{proof}

Clearly, for fixed $D$ and sufficiently large $r$, the estimate of Theorem \ref{thm3.1} is nontrivial if $\#\mathcal{A}_i>q^{1/2}$ for any $1\le i\le r$. As a direct application of Theorem \ref{thm3.1}, we obtain the following nontrivial bound for $q=3$.
\begin{cor}\label{cor3.2}
Let $r$ be an even number with 
$$r>4\frac{\log(2D^2)}{\log(4/3)}.$$ Let $\A=\{\A_i\}_{i=1}^r$ be a family of subsets of $\mathbb{F}_3$ of cardinality $\#\A_i=2$, $i=1,\ldots,r$, and $\G_{\A}$ be defined as before. Then, for any polynomial $f(X)\in\fqr[X]$ with a simple root in $\fqr$, we have 
$$|S(\G_{\A},\chi,f)|<2\sqrt{D}\cdot2^{\delta r},$$
where $D$ is the degree of the largest square-free divisor of $f(X)$, and
$$\delta=\frac{3}{4}+\frac{\log(3)}{8\log(2)}=0.94812\ldots.$$
\end{cor}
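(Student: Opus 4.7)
The plan is to obtain the corollary as a direct specialization of Theorem \ref{thm3.1} with $q=3$ and $\#\mathcal{A}_i=2$, so that $\#\mathcal{G}_{\mathcal{A}}=2^r$, followed by a short manipulation that converts the hypothesis on $r$ into a comparison between the two summands appearing inside the square root.

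First I would plug $q=3$, $\#\mathcal{A}_i=2$ and $\prod_{i=r/2+1}^{r}\#\mathcal{A}_i=2^{r/2}$ into the bound of Theorem \ref{thm3.1}, to obtain
$$|S(\mathcal{G}_{\mathcal{A}},\chi,f)|\le 2^{r/2}\cdot 3^{r/8}\left(2^{r/2}(4D-1)+2D^{2}\,3^{r/4}\right)^{1/2}.$$
The next step is to use the hypothesis $r>4\log(2D^2)/\log(4/3)$. Rearranging gives $(4/3)^{r/4}>2D^{2}$, i.e.\ $4^{r/4}>2D^{2}\,3^{r/4}$, which is the same as
$$2D^{2}\,3^{r/4}<2^{r/2}.$$
Hence the second term inside the square root is dominated by the first (since $4D-1\ge 3$ for $D\ge 1$), and in fact
$$2^{r/2}(4D-1)+2D^{2}\,3^{r/4}<2^{r/2}(4D-1)+2^{r/2}=4D\cdot 2^{r/2}.$$

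Substituting back yields
$$|S(\mathcal{G}_{\mathcal{A}},\chi,f)|<2^{r/2}\cdot 3^{r/8}\cdot\sqrt{4D\cdot 2^{r/2}}=2\sqrt{D}\cdot 2^{3r/4}\cdot 3^{r/8}.$$
Finally I would rewrite $3^{r/8}=2^{r\log 3/(8\log 2)}$, so that $2^{3r/4}\cdot 3^{r/8}=2^{\delta r}$ with $\delta=\tfrac{3}{4}+\tfrac{\log 3}{8\log 2}=0.94812\ldots$, which gives the claimed bound.

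There is no real obstacle here; the only non-mechanical step is recognising that the condition on $r$ is exactly what is needed to control the $2D^{2}q^{r/4}$ term against $\prod\#\mathcal{A}_i(4D-1)=2^{r/2}(4D-1)$, so that both contributions in the inner bracket are absorbed into a clean constant multiple of $2^{r/2}$. Everything else is bookkeeping and the standard change of base $3^{r/8}=2^{(\log_2 3)\,r/8}$.
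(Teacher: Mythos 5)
Your proposal is correct and follows essentially the same route as the paper: specialize Theorem \ref{thm3.1} with $q=3$, $\#\A_i=2$, use the hypothesis on $r$ in the form $2D^2\cdot 3^{r/4}<2^{r/2}=4^{r/4}$ to absorb the $2D^2q^{r/4}$ term, and simplify $3^{r/8}=2^{(\log 3/\log 2)\,r/8}$ to reach $2\sqrt{D}\cdot 2^{\delta r}$. The only cosmetic difference is that the paper writes $2^{r/2}$ as $4^{r/4}$ before invoking the condition on $r$.
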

\begin{proof}
By Theorem \ref{thm3.1}, we obtain that 
\begin{align*}|S(\G_{\A},\chi,f)|\le 
2^{r/2}\cdot 3^{r/8}\left(4D4^{r/4}-4^{r/4}+2D^2\cdot 3^{r/4}\right)^{1/2}.
\end{align*}
Note that $4^{r/4}-2D^2\cdot 3^{r/4}>0$ by the condition on $r$.
It follows that 
$$|S(\G_{\A},\chi,f)|<2^{r/2}\cdot 3^{r/8}\left(4D4^{r/4}
\right)^{1/2}=2\sqrt{D}\cdot2^{\delta r},$$
as desired.
\end{proof}
\begin{rem}\label{rem2.4}
Let $q=3$, and let $\mathcal{A}_i=\{0,2\}$ for any $1\le i\le r$. Let 
$$\G_{\A}=\left\{\sum_{i=1}^ra_i\alpha_i: a_i\in\A_i\right\},$$ 
$\chi$ be a multiplicative character of order $d\ge 2$ and $f(X)\in\fqr[X]$ satisfy that $f(X)$ is not a $d$th power of a polynomial over $\fqr$.  Iyer and Shparlinski \cite[Corollary 3.3]{[IS]} derived the following nontrivial bound for $|S(\G,\chi,f)|$:
$$|S(\G_{\A},\chi,f)|\le c\cdot 2^{\gamma r},$$
where $c>0$ is a constant and $\gamma=0.99128\ldots$
Compared to Corollary \ref{cor3.2}, our result offers a sharper upper bound for the character sum $S(\G_{\A},\chi,f)$ under stronger conditions.
\end{rem}

Recently, Grzywaczyk and the second author (see \cite[Theorem 2.1]{[GW]}) provided a new estimate, presented below, which is particularly interesting for small values of $q$.
\begin{lem}\label{lem3.4}
Let $c_1,c_2\ldots,c_r$ be elements of $\fq$, and let $\A=\{\A_i\}_{i=1}^r$ with $\mathcal{A}_i=\fq\setminus\{c_i\}$ for any $1\le i\le r$. Then for 
$$\G_{\A}=\left\{\sum_{i=1}^ra_i\alpha_i: a_i\in\mathcal{A}_i\ \text{for\ each}\ i\right\}$$ and a nontrivial multiplicative character $\chi$ of $\fqr$, one has
$$|S(\G_{\A},\chi)|\le \sqrt{3}(q-1)^{r/2}q^{\lceil 3r/4\rceil/2}.$$
\end{lem}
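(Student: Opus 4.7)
The plan is a Cauchy--Schwarz reduction to a bilinear character sum, with the inner sums over an $\fq$-subspace $V$ estimated by Fourier expansion on $V^{\perp}$ combined with Weil's theorem. Fix $k = \lceil 3r/4 \rceil$ and split the basis into the blocks $\{\alpha_1, \ldots, \alpha_k\}$ and $\{\alpha_{k+1}, \ldots, \alpha_r\}$, generating $\fq$-subspaces $V$ and $W$ of dimensions $k$ and $r-k$, respectively. Correspondingly, write $\G_{\A} = \mathcal{V} + \mathcal{W}$ with $\#\mathcal{V} = (q-1)^k$ and $\#\mathcal{W} = (q-1)^{r-k}$. Applying Cauchy--Schwarz on the outer $v$-sum and enlarging the range from $\mathcal{V}$ to $V$,
\[
|S(\G_{\A},\chi)|^2 \le \#\mathcal{V} \sum_{v \in V} \Big|\sum_{w \in \mathcal{W}} \chi(v+w)\Big|^2 = \#\mathcal{V} \sum_{w_1, w_2 \in \mathcal{W}} \sum_{v \in V} \chi(v+w_1)\overline{\chi}(v+w_2).
\]
The diagonal $w_1 = w_2$ contributes at most $\#\mathcal{V} \cdot \#\mathcal{W} \cdot q^k = (q-1)^r q^k$, which already produces the claimed factor after taking square roots.

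For the off-diagonal ($w_1 \ne w_2$), I would bound the inner sum by Fourier-expanding the indicator $\mathbf{1}_V(x) = q^{k-r} \sum_{\beta \in V^{\perp}} \psi(\beta x)$ against a nontrivial additive character $\psi$ of $\fqr$, turning the inner sum into a superposition of mixed character sums $\sum_{x \in \fqr} \chi(x+w_1)\overline{\chi}(x+w_2)\psi(\beta x)$. Since the rational function $(x+w_1)/(x+w_2)$ has squarefree-part degree $2$ when $w_1 \ne w_2$ and the additive twist has degree at most $1$, Weil's theorem yields a uniform estimate $\le 2q^{r/2}$ for each such $\fqr$-sum. Averaging over $\beta \in V^{\perp}$ (of size $q^{r-k}$) gives $\big|\sum_{v \in V} \chi(v+w_1)\overline{\chi}(v+w_2)\big| \le 2q^{r/2}$, and summing over off-diagonal pairs in $\mathcal{W}^2$ contributes at most $2(q-1)^{2r-k}q^{r/2}$ to $|S|^2$.

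Since $k \ge 3r/4$, the comparison $(q-1)^{r-k}/q^{k-r/2} \le ((q-1)/q)^{r/4} \le 1$ shows the off-diagonal is bounded by twice the diagonal. Therefore $|S|^2 \le 3(q-1)^r q^{\lceil 3r/4 \rceil}$, and taking square roots yields the stated bound. The main obstacle is invoking Weil's theorem cleanly for all pairs $w_1 \ne w_2$, in particular absorbing the isolated points $x \in \{-w_1,-w_2\}$ where $\chi$ vanishes as a harmless correction, and carefully tracking the constant $\sqrt{3}$ through the diagonal/off-diagonal comparison; the choice $k = \lceil 3r/4 \rceil$ is the smallest integer making the diagonal absorb the off-diagonal with an absolute constant.
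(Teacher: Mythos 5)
Your proof is correct, and it is worth noting that the paper itself does not prove Lemma \ref{lem3.4} at all: it is quoted from Grzywaczyk and Winterhof \cite{[GW]} (their Theorem 2.1), so the only in-paper comparison is with the method of Theorem \ref{thm3.1}, whose skeleton you reproduce exactly -- split the basis into two blocks, apply Cauchy--Schwarz, enlarge the outer range to the full $\fq$-subspace, separate diagonal from off-diagonal pairs, and bound the off-diagonal inner sums by a complete-sum estimate. The difference is in that last step: the paper takes the first block to be the subfield $\mathbb{F}_{q^{r/2}}$ so that the Wan/Martin--Yip bound (Lemma \ref{lem2.1}) gives $O(q^{r/4})$ for the inner sum (the source of its improvement, at the price of requiring $r$ even), whereas you, like \cite{[GW]}, take a block of dimension $k=\lceil 3r/4\rceil$ and complete the subspace sum via the expansion $\mathbf{1}_V(x)=q^{k-r}\sum_{\beta\in V^{\perp}}\psi(\beta x)$ plus the hybrid Weil bound, getting the weaker uniform estimate $2q^{r/2}$ but with no parity restriction. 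Your bookkeeping checks out: the diagonal contributes $(q-1)^rq^k$, the off-diagonal at most $2(q-1)^{2r-k}q^{r/2}$, and $(q-1)^{r-k}\le q^{k-r/2}$ for $k\ge 3r/4$ gives the factor $3$ and hence $\sqrt{3}$. Two details you should make explicit when writing it up: (i) rewrite the summand as $\chi\bigl((x+w_1)(x+w_2)^{d-1}\bigr)$ with $d$ the order of $\chi$; this is not a $d$-th power precisely because $w_1\ne w_2$, and the identity already takes care of the points $x=-w_1,-w_2$ (both sides vanish under the convention $\chi(0)=0$), so no correction term is actually needed; (ii) for $\beta\ne 0$ the hybrid Weil estimate gives $(2+1-1)q^{r/2}=2q^{r/2}$ and for $\beta=0$ the pure Weil bound gives $q^{r/2}$, so the uniform bound $2q^{r/2}$ indeed survives the averaging over the $q^{r-k}$ elements of $V^{\perp}$. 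Your closing remark that $k=\lceil 3r/4\rceil$ is the smallest admissible choice is not quite accurate for small $q$ (for $q=2$ the comparison holds already for $k\ge r/2$), but this does not affect the proof.
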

Let $\G_{\A}$ be the set given in Lemma \ref{lem3.4}. Using Theorem \ref{thm3.1} and taking $f(X)=X$, we obtain a sharper upper bound for the character sum $S(\G_{\A},\chi)$:
\begin{align}\label{c3-2}
|S(\G_{\A},\chi)|\le (q-1)^{3r/4}q^{r/8}\left(3+\frac{2q^{r/4}}{(q-1)^{r/2}}\right)^{1/2}
<2(q-1)^{3r/4}q^{r/8},\quad q\ge 3,~r\ge 10,
\end{align}
since we can easily verify that $2q^{r/4}<(q-1)^{r/2}$ for $r\ge 10$ and $q\ge 3$. This bound improves upon the result of Lemma \ref{lem3.4} for $q\ge 3$ and holds asymptotically as $r$ increases.

Grzywaczyk and the second author applied Lemma \ref{lem3.4} to prove that the set $\G_{\A}$ contains a primitive element of $\fqr$ when $q=4$ or $q=5$, provided $r$ is large enough. However, their result is not applicable for the case $q=3$. Our improved upper bound of (\ref{c3-2}) can be used to show that $\G_{\A}$ contains a primitive element of $\fqr$ for $q\ge 3$, assuming $r$ is a sufficiently large even integer. 
\begin{thm}\label{thm3.5}
Let $r$ be even and $\G_{\A}$ be defined as in Lemma \ref{lem3.4} with a basis $\{\alpha_1,\ldots,\alpha_r\}$ which contains a basis of $\mathbb{F}_{q^{r/2}}$, and let $N(\G_{\A})$ denote the number of primitive elements in $\G_{\A}$. Then we have the following lower bound: 
$$N(\G_{\A})>\frac{\phi(q^r-1)(q-1)^{3r/4}}{q^r-1}\left((q-1)^{1/4}-2q^{1/8}W(q^r-1)\right),$$
where $\phi$ is the Euler totient function, and $W(q^r-1)$ is the number of square-free divisors of $q^r-1$.
\end{thm}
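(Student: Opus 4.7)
The plan is to apply the standard Vinogradov-style characteristic function for primitive elements, which expresses the indicator of $\gamma\in\fqr^*$ being primitive as
\begin{equation*}
\mathbbm{1}_{\text{prim}}(\gamma)=\frac{\phi(q^r-1)}{q^r-1}\sum_{d\mid q^r-1}\frac{\mu(d)}{\phi(d)}\sum_{\text{ord}(\chi)=d}\chi(\gamma),
\end{equation*}
and then to exchange the order of summation to reduce the problem to estimating $S(\G_{\A},\chi)$ for all multiplicative characters $\chi$ of $\fqr$. Summing over $\gamma\in\G_{\A}$ yields
\begin{equation*}
N(\G_{\A})=\frac{\phi(q^r-1)}{q^r-1}\sum_{d\mid q^r-1}\frac{\mu(d)}{\phi(d)}\sum_{\text{ord}(\chi)=d}S(\G_{\A},\chi).
\end{equation*}

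Next I would isolate the contribution of the trivial character (the term $d=1$). Since the trivial character evaluated on any $\gamma\in\G_{\A}\subseteq\fqr^*$ equals $1$, this main term is $\frac{\phi(q^r-1)}{q^r-1}\cdot\#\G_{\A}=\frac{\phi(q^r-1)(q-1)^r}{q^r-1}$, using that $\#\G_{\A}=\prod_i\#\A_i=(q-1)^r$. For the remaining terms, only square-free $d>1$ contribute by virtue of $\mu(d)$, and for each such $d$ there are exactly $\phi(d)$ characters of order $d$, causing the $1/\phi(d)$ factor to cancel. All these characters are nontrivial, so I can apply the sharper estimate \eqref{c3-2}, namely $|S(\G_{\A},\chi)|<2(q-1)^{3r/4}q^{r/8}$, uniformly.

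Combining these bounds gives
\begin{equation*}
N(\G_{\A})>\frac{\phi(q^r-1)}{q^r-1}\Big((q-1)^r-\bigl(W(q^r-1)-1\bigr)\cdot 2(q-1)^{3r/4}q^{r/8}\Big),
\end{equation*}
where $W(q^r-1)-1$ counts the nontrivial square-free divisors. Factoring out $(q-1)^{3r/4}$ and weakening $W(q^r-1)-1\le W(q^r-1)$ produces precisely the stated lower bound.

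There is no genuine obstacle here — this is the textbook translation of a character sum bound into a primitive element count. The one point that needs care is verifying the hypotheses of \eqref{c3-2} (that we are in the setup of Lemma \ref{lem3.4}, with $f(X)=X$ which has a simple root in $\fqr$, and that $r\ge 10$ and $q\ge 3$) so that the uniform nontrivial character bound is legitimately applicable to every $\chi$ of order $d>1$, regardless of the specific order. Once that is checked, the rest is bookkeeping.
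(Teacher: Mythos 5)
Your argument is essentially the paper's own proof: both apply the Vinogradov formula, bound every nontrivial-character term uniformly by \eqref{c3-2}, and count the at most $W(q^r-1)-1$ square-free divisors $e\ge 2$, yielding the bound with exponents $r/4$ and $r/8$ (the exponents $1/4$, $1/8$ in the statement are evidently typos, as the paper's proof gives the same expression you obtain). One small inaccuracy: $\G_{\A}$ need not lie in $\fqr^*$ (it contains $0$ exactly when all $c_i\neq 0$), so the trivial-character contribution is $\#\G_{\A}-1$ in that case — the paper accordingly writes $(q-1)^r-1$ — but your weakening of $W(q^r-1)-1$ to $W(q^r-1)$ gives more than enough slack to absorb this, so your final inequality is still valid.
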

\begin{proof}
By applying the well-known Vinogradov formula (see \cite[Exercise 5.14]{[LN]}), we derive 
$$
\frac{q^r-1}{\phi(q^r-1)}N(\G_{\A})=\sum_{e\mid (q^r-1)}\frac{\mu(e)}{\phi(e)}\sum_{\chi\in\Lambda_e}\sum_{w\in\G_{\A}}\chi(w),$$
where $\mu$ is the M\"obius function and $\Lambda_e$ is the set of all multiplicative characters of $\fqr$ of order $e$. Using the upper bound of (\ref{c3-2}), we get
\begin{align*}
\frac{q^r-1}{\phi(q^r-1)}N(\G_{\A})
&>(q-1)^r-1-2(q-1)^{3r/4}q^{r/8}\sum_{\substack{2\le e\mid (q^r-1)}}1\\
&>(q-1)^r-2(q-1)^{3r/4}q^{r/8}W(q^r-1),
\end{align*}
which yields the desired inequality. 
\end{proof}
\begin{lem}\label{lem3.6}\cite{[CST]}
Let $W(t)$ represent the number of square-free divisors of $t$. Then for $t\ge 3$ one has that
$$W(t-1)<t^{\frac{0.96}{\log\log t}}.$$
\end{lem}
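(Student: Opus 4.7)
The plan is to reduce the statement to a classical explicit bound on the number of distinct prime divisors, and then to invoke Rosser-Schoenfeld-Dusart type estimates.

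First I would observe that the square-free divisors of any positive integer $m$ are in bijection with the subsets of its set of distinct prime divisors, so $W(m)=2^{\omega(m)}$, where $\omega(m)$ denotes the number of distinct prime factors of $m$. Taking logarithms, the claim becomes equivalent to showing
\[
\omega(t-1)\log 2 < \frac{0.96\log t}{\log\log t}, \qquad t\ge 3.
\]
Note that the constant $0.96/\log 2\approx 1.3848$ sits only slightly above the optimal asymptotic constant $1$ in Landau's inequality $\omega(n)\le(1+o(1))\log n/\log\log n$, so the bound is essentially as sharp as a uniform estimate of this form can be.

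Next, setting $k=\omega(t-1)$ and letting $p_1<p_2<\cdots$ denote the primes in increasing order, every $k$-tuple of distinct prime divisors of $t-1$ is pointwise at least as large as the first $k$ primes, so
\[
t-1\ge p_1 p_2\cdots p_k \qquad\Longrightarrow\qquad \log t>\theta(p_k),
\]
where $\theta$ denotes Chebyshev's theta function. I would then combine an explicit Rosser-Schoenfeld lower bound of the form $\theta(x)\ge(1-\varepsilon)x$ valid for $x$ above an explicit threshold with Dusart's estimate $p_k>k(\log k+\log\log k-1)$ for $k\ge 2$. This yields $\log t>(1-\varepsilon)k(\log k+\log\log k-1)$, and isolating $k$ after comparing $\log\log t$ with $\log k$ produces the desired bound $k\log 2<0.96\log t/\log\log t$. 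The finitely many values of $t$ for which the threshold-based argument does not yet apply would be verified by direct computation.

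The main obstacle is squeezing the constant $0.96$. Because $0.96/\log 2$ exceeds the optimal asymptotic constant by only about $38\%$, the argument cannot afford the weaker Rosser bound $p_k>k\log k$ and must instead rely on the Dusart refinement with its $\log\log k$ correction. The second difficulty is the transition between the asymptotic range where these explicit inequalities become sharp and the small-$t$ range where they do not, which has to be closed by a finite numerical check.
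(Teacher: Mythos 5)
First, a point of reference: the paper gives no proof of this lemma at all --- it is quoted verbatim from \cite{[CST]}, and the quickest complete argument in the literature is to invoke Robin's explicit bound $\omega(n)\le 1.3841\,\log n/\log\log n$ for $n\ge 3$, which gives $W(t-1)=2^{\omega(t-1)}\le (t-1)^{1.3841\log 2/\log\log (t-1)}$ and hence the claim, since $1.3841\log 2\approx 0.9594<0.96$ (plus a routine passage from $t-1$ to $t$ using the monotonicity of $x\mapsto \log x/\log\log x$ for $x>e^{e}$). So your attempt has to stand on its own, and while its skeleton ($W=2^{\omega}$, the primorial comparison $\log t>\theta(p_k)$ with $k=\omega(t-1)$, explicit Chebyshev/Dusart input, finite verification) is indeed the standard route by which such explicit bounds are proved, the write-up has a genuine gap exactly where the difficulty of this particular statement lives.

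Concretely, two things. First, the step ``isolating $k$ after comparing $\log\log t$ with $\log k$'' is not a formal manipulation: for fixed $k$ you must reduce to the extremal value $t=1+p_1\cdots p_k$, which requires observing that $0.96\log t/\log\log t$ is increasing in $t$ for $t>e^{e}$ (and treating $3\le t<e^{e}$, where it is decreasing, separately); without this reduction the lower bound $\log t>\theta(p_k)$ does not by itself control the right-hand side. Second, and more seriously, your closing sentence ``the finitely many values of $t$ for which the threshold-based argument does not yet apply would be verified by direct computation'' does not work as stated. The inequality is razor-thin at moderate $k$: for $k=9$, i.e.\ $t=223092871$, one has $2^{9}=512$ against $t^{0.96/\log\log t}\approx 514.6$, a margin of about half a percent, so no estimate of the shape $\theta(x)\ge(1-\varepsilon)x$ combined with Dusart's bound on $p_k$ can deliver the constant $0.96$ in that range; the analytic part only takes over for $k$ of the order of several dozen, which corresponds to $t$ beyond roughly $10^{50}$. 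Hence the ``finitely many values of $t$'' below your threshold are astronomically many, and direct computation over them is impossible; the finite check must instead be organized over $k$, i.e.\ over the short list of primorials $1+p_1\cdots p_k$ up to the threshold (again using the monotonicity reduction). With those two ingredients made explicit the proof closes, but as written they are the actual content of the proof rather than an afterthought.
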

Thus, from Theorem \ref{thm3.5} and Lemma \ref{lem3.6}, we obtain the following result.
\begin{cor}\label{cor3.7}
Let $c_1,\ldots,c_r$ be elements of $\fq$, and let $\A=\{\A_i\}_{i=1}^r$ with $\mathcal{A}_i=\fq\setminus\{c_i\}$ for each $i$. Then for any $q\ge 3$ the set $\G_{\A}$ contains a primitive element of $\fqr$ if $r$ is a large enough even integer. Precisely, for some small $q$ and even $r$, $\G_{\A}$ contains a primitive element of $\fqr$ provided one of the following holds:
\begin{enumerate}[label=(\alph*)]
    \item $q=9$ and $r\ge 2504$;
    \item $q=8$ and $r\ge 3256$;
    \item $q=7$ and $r\ge 4754$;
    \item $q=5$ and $r\ge 25662$;
    \item $q=4$ and $r\ge 363184$;
    \item $q=3$ and $r\ge 7951\cdot 10^8$.
\end{enumerate}
\end{cor}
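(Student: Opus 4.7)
The plan is to combine Theorem~\ref{thm3.5} with Lemma~\ref{lem3.6}. By Theorem~\ref{thm3.5}, to conclude $N(\G_{\A})>0$ it suffices to show
$$(q-1)^{r/4}>2q^{r/8}W(q^r-1),$$
since this forces the second factor in the stated lower bound to be positive. Substituting the estimate $W(q^r-1)<q^{0.96r/\log\log(q^r)}$ from Lemma~\ref{lem3.6} (applied with $t=q^r$) and taking natural logarithms, this sufficient condition becomes
$$\frac{r}{8}\log\frac{(q-1)^2}{q}>\log 2+\frac{0.96\, r\log q}{\log\log(q^r)}.$$

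For any $q\ge 3$ one has $(q-1)^2>q$, so the coefficient $\frac{1}{8}\log\frac{(q-1)^2}{q}$ on the left is strictly positive, whereas the second term on the right vanishes like $1/\log\log(q^r)$ as $r\to\infty$. Hence the displayed inequality is eventually satisfied, which proves the general qualitative assertion that $\G_{\A}$ contains a primitive element of $\fqr$ for every sufficiently large even $r$ whenever $q\ge 3$.

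For each individual $q$ listed in (a)--(f), the numerical threshold is obtained by solving this inequality explicitly, i.e.~by finding the smallest even $r$ for which
$$r\left(\frac{1}{8}\log\frac{(q-1)^2}{q}-\frac{0.96\log q}{\log\log(q^r)}\right)>\log 2.$$
This is a routine but delicate monotonicity check, performed separately for each $q\in\{3,4,5,7,8,9\}$. The main obstacle is not conceptual but numerical: the smaller $q$ is, the smaller the leading constant $\frac{1}{8}\log\frac{(q-1)^2}{q}$, and hence the larger $r$ must be before the $1/\log\log(q^r)$ correction becomes negligible. In particular, for $q=3$ the leading coefficient $\frac{1}{8}\log(4/3)\approx 0.036$ forces $\log\log(3^r)$ to noticeably exceed $0.96\log 3/0.036\approx 29$, which is what drives the large threshold $r\ge 7951\cdot 10^{8}$ in~(f); for the remaining values of $q$ the coefficient grows rapidly with $q$, yielding the much smaller thresholds listed in~(a)--(e).
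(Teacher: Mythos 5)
Your argument is correct and is essentially identical to the paper's proof: both combine Theorem \ref{thm3.5} with Lemma \ref{lem3.6} (applied with $t=q^r$) to reduce the claim to the sufficient condition $\left(2q^{0.96r/\log\log q^r}\right)^{1/r}\le\left((q-1)^2/q\right)^{1/8}$, which is exactly your logarithmic inequality, then note that the left-hand side decreases to $1$ while the right-hand side exceeds $1$ for every $q\ge 3$, and delegate the explicit thresholds (a)--(f) to a numerical check. The only quibble is phrasing: the term $0.96\,r\log q/\log\log(q^r)$ does not vanish (it is merely $o(r)$), but your rearranged inequality $r\bigl(\tfrac18\log\tfrac{(q-1)^2}{q}-\tfrac{0.96\log q}{\log\log(q^r)}\bigr)>\log 2$ makes the intended, correct comparison clear.
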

\begin{proof}
Theorem \ref{thm3.5} and Lemma \ref{lem3.6} provide a sufficient condition for the existence of a primitive element in \( \G_{\A} \):
\begin{align}\label{c3-3}
\left( 2q^{\frac{0.96r}{\log\log q^r}} \right)^{\frac{1}{r}} \leq \left( \frac{(q-1)^2}{q} \right)^{\frac{1}{8}}.
\end{align}
Note that the term \( \left( 2q^{\frac{0.96r}{\log\log q^r}} \right)^{\frac{1}{r}} \) is monotonically decreasing and tends to 1 as \( r \to \infty \), while \( \left( \frac{(q-1)^2}{q} \right)^{\frac{1}{8}} > 1 \) for all \( q \geq 3 \). Thus, the inequality holds for sufficiently large \( r \). Moreover, for \( q = 4, 5, 7, 8, \) or \( 9 \), a computer-assisted check verifies that inequality \eqref{c3-3} holds when \( q = 9 \) and \( r \geq 2504 \), \( q = 8 \) and \( r \geq 3256 \), \( q = 7 \) and \( r \geq 4754 \), \( q = 5 \) and \( r \geq 25662 \), \( q = 4 \) and \( r \geq 363184 \), and $q=3$ and $r\ge 7951\cdot 10^8$. Hence, the result follows.
 \end{proof}
 The following table (see TABLE \ref{table1}) displays the performance of three estimates in establishing the existence of primitive elements in the finite field $\fqr$ avoiding affine hyperplanes. We used the term 'large enough' when the smallest suitable $r$ was at least $10^{12}$ and not exactly computable without too much effort.
 \begin{table}[h]
\centering
\caption{Values of $(q,r)$ for which the three estimates are applicable}
\label{table1}
\begin{tabular}{|c|c|c|c|}
\hline
\( q \) & \makecell{ New estimate\\ (even $r$)}  & \makecell{Estimate of \cite{[GW]}\\ (any $r$)} & \makecell{Estimate of \cite{[FR]}\\ (any $r$)}  \\
\hline
9 & $r\ge 2504$  & $r\ge 8040$ & $r$ large enough   \\
8& $r\ge 3256$ & $r\ge 14831$ & $r$ large enough\\
7 & $r\ge 4754$ & $r\ge 39247$ & $r$ large enough \\
5& $r\ge 25662$ & $r\ge 1.92\cdot 10^7$ & ---  \\
4& $r\ge 363184$ & $r$ large enough & ---\\
3& $r\ge7.96\cdot 10^{11}$ & $r$ large enough & ---\\
\hline
\end{tabular}
\end{table}
Therefore, the lower bound we obtained for $N(\G_{\A})$ is significantly better than the results of Fernandes and Reis \cite{[FR]} and Grzywaczyk and the second author \cite{[GW]} when $q$ is small. 

\section{$\G$ with $s$-sparse elements}
Let $r$ be an even number. Let $\{\alpha_1,\ldots,\alpha_{r/2}\}$ be a basis of $\mathbb{F}_{q^{r/2}}$ over $\fq$, and $\{\alpha_1,\ldots,\alpha_r\}$ be a completion of $\{\alpha_1,\ldots,\alpha_{r/2}\}$ to a basis of $\fqr$ over $\fq$. We define the \textit{weight} $wt(\xi)$ of $\xi\in\fqr$ by 
$$wt(\xi)=\#\{1\le i\le r: a_i\ne 0\}\quad \mbox{if}\quad \xi=a_1\alpha_1+\cdots+a_r\alpha_r$$ with each $a_i\in\fq$. For a given positive integer $s<r$ put
\begin{align}\label{c4-0}
\mathcal{G}_s=\{\xi\in\fqr: wt(\xi)=s\}.
\end{align}
We call $\G_s$ the set of {\em $s$-sparse elements} of $\fqr$ with respect to the basis $\{\alpha_1,\ldots,\alpha_r\}$. In this section, we will estimate the sum $S(\G_s,\chi,f)$, where $\chi$ is a nontrivial multiplicative character of $\fqr$ and $f(X)$ is a polynomial over $\fqr$. We focus on the most interesting case $q=2$. 

Let us begin by introducing preliminary concepts. Let $H(x)$ be the \textit{binary entropy} function defined by 
\begin{equation}\label{entropy}
H(x)=-x\log_2x-(1-x)\log_2(1-x),\ 0<x<1,
\end{equation}
and $H(0)=H(1)=0$. We also define 
$$H^*(x)=\begin{cases}
H(x),&\text{if}\ 0\le x\le \frac{1}{2},\\
1,&\text{if}\ x>\frac{1}{2}.
\end{cases}$$
The following lemma on the size of binomial coefficients will be used to bound the cardinality of some sets.
\begin{lem}\label{lem4.1} \cite[Chapter 10, Corollary 9]{[MS]}
For any positive integer $n$ and a real~$\gamma$ with $0<\gamma\le \frac{1}{2}$, we have
$$\sum_{0\le m\le\gamma n}\binom{n}{m}\le 2^{nH(\gamma)}.$$
Consequently, 
$$\binom{n}{m}\le2^{nH^*(m/n)}$$
for any positive integers $m$ and $n$ with $m\le n$.
\end{lem}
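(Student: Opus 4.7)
The plan is to prove the first inequality via the classical Chernoff-style trick: extract a partial binomial sum from the identity $(\gamma+(1-\gamma))^n=1$, then use monotonicity on the truncated range. The second, ``consequently'' part is an immediate specialisation.

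First I would expand
$$1=(\gamma+(1-\gamma))^n=\sum_{m=0}^n\binom{n}{m}\gamma^m(1-\gamma)^{n-m}$$
and discard the nonnegative terms with $m>\gamma n$, leaving
$$1\ge \sum_{0\le m\le \gamma n}\binom{n}{m}\gamma^m(1-\gamma)^{n-m}.$$
On this truncated range the hypothesis $\gamma\le 1/2$ gives $\gamma/(1-\gamma)\le 1$, so the map $m\mapsto (\gamma/(1-\gamma))^m$ is non-increasing and hence
$$\gamma^m(1-\gamma)^{n-m}=(1-\gamma)^n\left(\frac{\gamma}{1-\gamma}\right)^m\ge \gamma^{\gamma n}(1-\gamma)^{(1-\gamma)n}.$$
Pulling this uniform lower bound outside the sum and rearranging gives
$$\sum_{0\le m\le\gamma n}\binom{n}{m}\le \gamma^{-\gamma n}(1-\gamma)^{-(1-\gamma)n}=2^{nH(\gamma)},$$
using $\gamma^{-\gamma}(1-\gamma)^{-(1-\gamma)}=2^{H(\gamma)}$ from the definition \eqref{entropy}.

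For the consequence I would split on the size of $m/n$. If $m/n\le 1/2$, set $\gamma=m/n$; then $\binom{n}{m}$ is one of the summands on the left-hand side of the just-proved inequality, so $\binom{n}{m}\le 2^{nH(m/n)}=2^{nH^*(m/n)}$. If $m/n>1/2$, the trivial bound $\binom{n}{m}\le 2^n=2^{n\cdot 1}=2^{nH^*(m/n)}$ matches the piecewise definition of $H^*$.

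There is no serious obstacle: the argument is elementary. The only point where real care is needed is the monotonicity step, which depends precisely on $\gamma/(1-\gamma)\le 1$, i.e.\ on the hypothesis $\gamma\le 1/2$; the boundary case $\gamma=1/2$ reduces to the trivial $\sum_{m\le n/2}\binom{n}{m}\le 2^n$.
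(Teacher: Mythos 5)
Your proof is correct, and the paper itself gives no argument for this lemma: it is quoted directly from MacWilliams and Sloane \cite[Chapter 10, Corollary 9]{[MS]}, whose proof is exactly the Chernoff-style argument you reproduce (drop the terms with $m>\gamma n$ from $(\gamma+(1-\gamma))^n=1$, use $\gamma/(1-\gamma)\le 1$ to bound each surviving term below by $\gamma^{\gamma n}(1-\gamma)^{(1-\gamma)n}$, and rearrange). Your handling of the ``consequently'' part, splitting into $m/n\le \frac{1}{2}$ (where $\binom{n}{m}$ is a single summand of the proved sum) and $m/n>\frac{1}{2}$ (where the trivial bound $2^n$ matches $H^*=1$), is also exactly what the piecewise definition of $H^*$ requires, so there is nothing to add.
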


As usual we use the notation 
$$A(r) \ll B(r)\quad \mbox{if}\quad |A(r)| \leq c \cdot B(r)\quad \mbox{for some constant}\ c > 0$$
and
$$A(r) = o(B(r))\quad \mbox{if} \quad \lim_{r \to \infty} \frac{A(r)}{B(r)} = 0.$$
The implied constants depend only on the degree of a fixed polynomial.

Let $\mathrm{f, g, h}$ be three functions in two variables $\rho$ and $\lambda$ with $0<\lambda\le\rho\le \frac{1}{2}$
defined by
\begin{align}\label{c4-1-1}
&\mathrm{f}(\rho,\lambda)=\frac{1}{2}H^{*}(2\lambda)+\frac{1}{2}H^*(2\rho-2\lambda),\\\label{c4-1-2}
&\mathrm{g}(\rho,\lambda)=\frac{1}{8}+\frac{1}{2}H(\rho)+\frac{1}{4}H^*(2\rho-2\lambda),\\\label{c4-1-3}
&\mathrm{h}(\rho,\lambda)=\frac{1}{2}H(\rho)+\frac{1}{4}.
\end{align}

Now we can state the main result of this section
as follows.
\begin{thm}\label{thm4.2}
Let $r$ be an even number and $q=2$. Let $\chi$ be a nontrivial multiplicative character of the finite field $\fqr$. Let $\{\alpha_1, \ldots, \alpha_r\}$ be a basis of $\fqr$ over $\fq$, such that $\{\alpha_1, \ldots, \alpha_{r/2}\}$ forms a basis of $\mathbb{F}_{q^{r/2}}$ over $\fq$. For any $s$ with $1\le s<r$, 
let $\G_s$ be defined as in \eqref{c4-0}, and put $\rho=\frac{\min\{s,r-s\}}{r}$. Then, for any polynomial $f(X)\in\fqr[X]$ with a simple root in $\fqr$, we have
$$|S(\G_s,\chi,f)|\le 2^{\eta'(\rho)r+o(r)},$$
where
$$\eta'(\rho)=\min_{0<\lambda\le\frac{1}{2}\rho}\max\{\mathrm{f}(\rho,\lambda),\mathrm{g}(\rho,\lambda),\mathrm{h}(\rho,\lambda)\},$$
where $\mathrm{f}$, $\mathrm{g}$ and $\mathrm{h}$ are defined by \eqref{c4-1-1}, \eqref{c4-1-2} and \eqref{c4-1-3}, respectively,
and the implied constant depends only on the degree of $f(X)$.
\end{thm}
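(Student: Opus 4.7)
The proof mirrors the structure of Theorem~\ref{thm3.1}, exploiting the subfield $\mathbb{F}_{q^{r/2}}$ in the sparse setting. Each $\xi=\sum_{i=1}^{r}a_i\alpha_i\in\G_s$ decomposes uniquely as $\xi=v+w$ with $v=\sum_{i=1}^{r/2}a_i\alpha_i\in\mathbb{F}_{q^{r/2}}$ and $w=\sum_{i=r/2+1}^{r}a_i\alpha_i$ in the complementary subspace $\mathcal{H}=\Span_{\fq}(\alpha_{r/2+1},\ldots,\alpha_r)$. Setting $\mathcal{V}_k=\{v\in\mathbb{F}_{q^{r/2}}:\wt(v)=k\}$ and $\mathcal{W}_j=\{w\in\mathcal{H}:\wt(w)=j\}$, one has the disjoint decomposition $\G_s=\bigsqcup_{k}\G_s^{(k)}$ with $\G_s^{(k)}=\mathcal{V}_k+\mathcal{W}_{s-k}$, $\#\mathcal{V}_k=\binom{r/2}{k}$ and $\#\mathcal{W}_{s-k}=\binom{r/2}{s-k}$. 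Since the ensuing bounds depend only on the sizes of $\mathcal{V}_k$ and $\mathcal{W}_{s-k}$, and these are invariant under $k\leftrightarrow r/2-k$, I may assume $s\le r/2$ (so $\rho=s/r$); the case $s>r/2$ follows after an analogous reparametrization.

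Fix any $\lambda\in(0,\rho/2]$ and set $k_0=\lfloor\lambda r\rfloor$. The plan is to split $S(\G_s,\chi,f)=\sum_{k\le k_0}S(\G_s^{(k)},\chi,f)+\sum_{k>k_0}S(\G_s^{(k)},\chi,f)$, using the trivial bound on the small-$k$ ``light'' part and the subfield estimate on the large-$k$ ``heavy'' part. Since the ratio of consecutive terms shows that $\binom{r/2}{k}\binom{r/2}{s-k}$ is non-decreasing on $[0,s/2]$ and $k_0\le s/2$, Lemma~\ref{lem4.1} yields
$$\sum_{k\le k_0}|S(\G_s^{(k)},\chi,f)|\le(k_0+1)\binom{r/2}{k_0}\binom{r/2}{s-k_0}\le 2^{\mathrm{f}(\rho,\lambda)\,r+o(r)}.$$

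For each $k>k_0$ I apply Cauchy--Schwarz to $S(\G_s^{(k)},\chi,f)=\sum_{v\in\mathcal{V}_k}\sum_{w\in\mathcal{W}_{s-k}}\chi(f(v+w))$ with $v$ outside, then enlarge the outer summation from $\mathcal{V}_k$ to all of $\mathbb{F}_{q^{r/2}}$. Expanding the square and invoking Corollary~\ref{cor2.4} (bounding the inner sum by $(4D-1)q^{r/4}$ for pairs $(w_1,w_2)\notin\B$, and by the trivial $q^{r/2}$ for the at most $2D^2\#\mathcal{W}_{s-k}$ exceptional pairs) gives
$$|S(\G_s^{(k)},\chi,f)|\ll_{D}\sqrt{\#\mathcal{V}_k}\cdot\#\mathcal{W}_{s-k}\cdot q^{r/8}+\sqrt{\#\mathcal{V}_k\#\mathcal{W}_{s-k}}\cdot q^{r/4}.$$
Using $\#\mathcal{V}_k\#\mathcal{W}_{s-k}\le\#\G_s\le 2^{rH(\rho)}$ from Lemma~\ref{lem4.1}, and additionally $\sqrt{\#\mathcal{W}_{s-k}}\le 2^{(r/4)H^*(2(s-k)/r)}\le 2^{(r/4)H^*(2\rho-2\lambda)}$ in the first summand (since $s-k\le s-k_0$ and $H^*$ is non-decreasing), the first summand acquires exponent $\mathrm{g}(\rho,\lambda)$ and the second exponent $\mathrm{h}(\rho,\lambda)$. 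Summing over the $\le r/2$ relevant values of $k$ costs only $o(r)$ in the exponent.

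Combining the two parts gives $|S(\G_s,\chi,f)|\le 2^{\max\{\mathrm{f}(\rho,\lambda),\mathrm{g}(\rho,\lambda),\mathrm{h}(\rho,\lambda)\}\,r+o(r)}$ for every admissible $\lambda$, and optimizing over $\lambda\in(0,\rho/2]$ delivers $\eta'(\rho)$. The main technical obstacle I anticipate is verifying that the trivial bound for the light part really realises the exponent $\mathrm{f}(\rho,\lambda)$, i.e.\ proving
$$\max_{0\le x\le 2\lambda,\,x+y=2\rho}\bigl(H^*(x)+H^*(y)\bigr)=H^*(2\lambda)+H^*(2\rho-2\lambda),$$
which requires a short case analysis depending on whether $2\rho-2\lambda$ lies below or above the threshold $1/2$ where $H^*$ becomes constant. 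The restriction $\lambda\le\rho/2$ is precisely what keeps $k_0$ in the monotone-increasing region of $\binom{r/2}{k}\binom{r/2}{s-k}$ and forces $2\lambda\le\rho\le 2\rho-2\lambda$, so that $k_0$ (equivalently $x=2\lambda$) attains the maximum in the above identity.
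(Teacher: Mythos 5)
Your proposal is correct and follows essentially the same route as the paper: the same decomposition of $\G_s$ according to the weight of the $\mathbb{F}_{q^{r/2}}$-component, the trivial bound for the pieces below the threshold $\lfloor\lambda r\rfloor$, and Cauchy--Schwarz with extension of the subfield variable to all of $\mathbb{F}_{q^{r/2}}$ plus Corollary~\ref{cor2.4} for the remaining pieces, yielding exactly the exponents $\mathrm{f},\mathrm{g},\mathrm{h}$. The only minor differences are that the paper reduces to $s\le r/2$ by shifting by $\alpha_1+\cdots+\alpha_r$ (which flips weights when $q=2$) rather than by your size-symmetry reparametrization (which works, though the symmetry is really $(s,k)\mapsto(r-s,r/2-k)$, not $k\mapsto r/2-k$ alone), and it locates the light-part maximum via the entropy function (citing \cite{[MSW]}) instead of your direct monotonicity of $\binom{r/2}{k}\binom{r/2}{s-k}$, which also disposes of the $H^*$-maximization issue you flag at the end.
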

\begin{proof}
We may restrict ourselves to the case $s\le \frac{r}{2}$, that is $\rho\le \frac{1}{2}$, since otherwise we may consider the polynomial $f(X+\alpha_1+\ldots+\alpha_r)$ and the set $\G_{r-s}$ instead of $f(X)$ and~$\G_s$. Both lead to the same character sums since $wt(\xi+\alpha_1+\ldots+\alpha_r)=r-wt(\xi)$.

Now put
$$U_1=\left\{a_1\alpha_1+\cdots+a_{r/2}\alpha_{r/2}: a_i\in\ftwo\ \text{for\ any $i$ with}\ 1\le i\le \frac{r}{2}\right\}$$
and 
$$U_2=\left\{a_{r/2+1}\alpha_{{r/2}+1}+\cdots+a_r\alpha_r: a_i\in\ftwo\ \text{for\ any $i$ with}\ {\frac{r}{2}}+1\le i\le r\right\}.$$
We decompose $\G_s$ into the disjoint union of $s+1$ subsets as 
$$\G_s=\bigcup_{i=0}^{s}\left\{u_1+u_2: u_1\in U_1^{(i)}\ \text{and}\ u_2\in U_2^{(s-i)}\right\},$$
where $U_j^{(i)}=\{u\in U_j: wt(u)=i\}$ for any $j=1,2$ and $0\le i\le s$. It follows that
$$S(\G_s,\chi,f)=\sum_{i=0}^{s}\M_i,$$
where 
$$\M_i=\sum_{u_1\in U_1^{(i)}}\sum_{u_2\in U_2^{(s-i)}}\chi(f(u_1+u_2)).$$
Put $t=\lfloor\lambda r \rfloor$, where $\lambda$ with $0<\lambda\le\rho\le\frac{1}{2}$ is a variable. In the following, we estimate~$|\M_i|$. 

For $0\le i<t$ we use  the trivial bound
$$|\M_i|\le \#U_{1}^{(i)}\#U_{2}^{(s-i)}=\binom{\frac{r}{2}}{i}\binom{\frac{r}{2}}{s-i}.$$
Then by Lemma \ref{lem4.1} we obtain 
$$|\M_i|\le 2^{\tilde{\mathrm{f}}(\tau)r},$$ 
where $\tau=\frac{i}{r}$ and
$$\tilde{\mathrm{f}}(\tau)=\frac{1}{2}(H^*(2\tau)+H^*(2\rho-2\tau))\quad 0\le \tau\le \lambda.$$ 
It is clear that $\tilde{\mathrm{f}}(\tau)$ attains its maximum at $\tau = \min\{\lambda, \frac{\rho}{2}\}$ when $\tau$ varies over $0 < \tau \leq \lambda$ (see, for instance, \cite[Lemma 3.6]{[MSW]}). So we may assume $\lambda\le \frac{\rho}{2}$ and write 
$$\mathrm{f}(\rho,\lambda)=\tilde{\mathrm{f}}(\lambda)=\frac{1}{2}(H^*(2\lambda)+H^*(2\rho-2\lambda)).$$ 
Then we get 
\begin{align}\label{c4-4}
\sum_{i=0}^{t-1}|\M_i|\le t\cdot 2^{\mathrm{f}(\rho,\lambda)r}=2^{\mathrm{f}(\rho,\lambda)r+o(r)}.
\end{align}

Next, we proceed to estimate $|\M_i|$ for $i\ge t$. Note that
$$|\M_i|\le\sum_{u_1\in U_1^{(i)}} \left|\sum_{u_2\in U_2^{(s-i)}}\chi(f(u_1+u_2))\right|.
$$
By the Cauchy-Schwarz inequality 
we get
\begin{align}\label{c4-2}
|\M_i|^2\le\#U_1^{(i)}\sum_{u_1\in U_1^{(i)}}\left|\sum_{u_2\in U_2^{(s-i)}}\chi(f(u_1+u_2))\right|^2.
\end{align}
For the sums on the right-hand side of (\ref{c4-2}), we extend the first sum over $U_1^{(i)}$ to a sum over $U_1=\mathbb{F}_{q^{r/2}}$, and derive
\begin{align*}
\sum_{u_1\in U_1^{(i)}}\left|\sum_{u_2\in U_2^{(s-i)}}\chi(f(u_1+u_2))\right|^2&\le\sum_{u_1\in\mathbb{F}_{q^{r/2}}}\left|\sum_{u_2\in U_2^{(s-i)}}\chi(f(u_1+u_2))\right|^2\\&=\sum_{u^{\prime}_2,u^{\prime\prime}_2\in U_2^{(s-i)}}\left|\sum_{u_1\in\mathbb{F}_{q^{r/2}}}\chi(f(u_1+u^{\prime}_2))\overline{\chi}(f(u_1+u^{\prime\prime}_2))\right|.
\end{align*}
We bound the inner sum by Corollary \ref{cor2.4} for the pairs $(u^{\prime}_2,u^{\prime\prime}_2)\in U_2^{(s-i)}\times U_2^{(s-i)}$ for which $f(X+u^{\prime}_2)$ and $f(X+u^{\prime\prime}_2)$ share no conjugate root over $\mathbb{F}_{q^{r/2}}$. For the other pairs $(u^{\prime}_2,u^{\prime\prime}_2)$ we use the trivial bound $q^{r/2}$. Thus,
\begin{align*}
&\sum_{u_1\in U_1^{(i)}}\left|\sum_{u_2\in U_2^{(s-i)}}\chi(f(u_1+u_2))\right|^2\ll (\#U_2^{(s-i)})^22^{r/4}+\#U_2^{(s-i)}2^{r/2}.
\end{align*}
It then follows from (\ref{c4-2}) that
$$|\M_i|^2\ll\#U_1^{(i)}\#U_2^{(s-i)}\left(\#U_2^{(s-i)}2^{r/4}+2^{r/2}\right).$$
Together with $\#U_1^{(i)}\#U_2^{(s-i)}\le\#\G_s$ we get
$$|\M_i|^2\ll\#\G_s\#U_2^{(s-i)}2^{r/4}+\#\G_s2^{r/2}.$$
Finally, by Lemma \ref{lem4.1} we have
$$\# \G_s=\binom{r}{s}\le 2^{H(\rho)r} \quad \mbox{and}\quad \# U_2^{(s-i)}=\binom{\frac{r}{2}}{s-i}\le 2^{H^*(2(\rho-\lambda)) r/2+o(r)},$$
and obtain
$$|\M_i|\ll(2^{\mathrm{g}(\rho,\lambda)r}+2^{\mathrm{h}(\rho,\lambda)r})2^{o(r)},$$
where $ \mathrm{g}(\rho,\lambda)$ and $\mathrm{h}(\rho,\lambda)$ are defined as in (\ref{c4-1-2}) and (\ref{c4-1-3}), and then we have
\begin{align}\label{c4-7}
\sum_{i=t}^{s}|\M_i|\ll (s-t+1)(2^{\mathrm{g}(\rho,\lambda)r}+2^{\mathrm{h}(\rho,\lambda)r})2^{o(r)}\ll(2^{\mathrm{g}(\rho,\lambda)r}+2^{\mathrm{h}(\rho,\lambda)r})2^{o(r)}.
\end{align}
Therefore, combining (\ref{c4-4}) and (\ref{c4-7}) together, we arrive at
the final result. 
\end{proof}

\section{Conclusion}
In this paper, we continued the study of multiplicative character sums over subsets of finite fields by focusing on two distinct classes of subsets in quadratic extensions $\fqr$, where $r$ is an even integer. Our primary contribution is a new and sharper estimate for the character sums $S(\mathcal{G}, \chi, f)$, reducing them to character sums over subfields, specifically $\mathbb{F}_{q^{r/2}}$, rather than over arbitrary linear spaces as in previous works. This approach has allowed us to improve earlier bounds of \cite{[GW],[IS],[MSW]}, subject to the condition that the basis $\{\alpha_1, \ldots, \alpha_r\}$ of $\fqr$ contains a basis 
of its subfield $\mathbb{F}_{q^{r/2}}$.

\section*{Acknowledgment}
This research was partially supported by the China Scholarship Council Fund (Grant No.\ 202301010002), the Scientific Research Innovation Team Project of China West Normal University (Grant No.\ KCXTD2024-7), and the Austrian Science Fund (FWF) (Grant No.\ 10.55776/PAT4719224).

The authors would like to thank Daqing Wan and Chi Hoi Yip for pointing to the correction of \cite{[Wan]} presented in \cite{[MY]}.

\end{document}